\newtheorem{thm}{Theorem}[section]
\newtheorem{lem}[thm]{Lemma}
\newtheorem{prop}[thm]{Proposition}
\newtheorem{cor}[thm]{Corollary}
\newtheorem{rem}[thm]{Remark}
\theoremstyle{definition}
\newtheorem{defn}[thm]{Definition}
\numberwithin{equation}{section} 
\numberwithin{figure}{section}
\numberwithin{table}{section}
\newcommand{\bE}{\mathbf{E}}
\newcommand{\eps}{\varepsilon}
\newcommand{\bP}{\mathbf{P}}
\newcommand{\Om}{\Omega}
\newcommand{\tr}{\textnormal{tr}}
\newcommand{\com}{\mathbb{C}}
\newcommand{\pol}{A}
\newcommand{\fc}{\widehat{A}}
\newcommand{\bs}{\textbf{s}}
\newcommand{\real}{\mathbb{R}}
\newcommand{\im}{\textnormal{Im}}
\newcommand{\Br}{\textnormal{Br}}
\newcommand{\qBr}{\textnormal{qBr}}
\newcommand{\cF}{\mathcal{F}}
\newcommand{\all}{\textnormal{all}}
\begin{document}

\title[Noncommutative  Bohnenblust--Hille inequalities ]{Noncommutative Bohnenblust--Hille inequalities}

\author{Alexander Volberg}
\address{(A.V.) Department of Mathematics, MSU, 
East Lansing, MI 48823, USA, and Hausdorff Center of Mathematics}
\email{volberg@math.msu.edu}

\author{Haonan Zhang}
\address{(H.Z.) Department of Mathematics, University of California, Irvine, CA 92617, USA and Institute of Science and Technology Austria (ISTA), Am Campus 1, 3400 Klosterneuburg, Austria
}
\email{haonanzhangmath@gmail.com}

\begin{abstract}
Bohnenblust--Hille inequalities for Boolean cubes have been proven with dimension-free constants that grow subexponentially in the degree \cite{defant2019fourier}. Such inequalities have found great applications in learning low-degree Boolean functions \cite{eskenazis2022learning}. Motivated by learning quantum observables, a qubit analogue of Bohnenblust--Hille inequality for Boolean cubes was recently conjectured in \cite{RWZ22}. The conjecture was resolved in \cite{CHP}. In this paper, 
we give a new proof of these Bohnenblust--Hille inequalities for qubit system with constants that are dimension-free and of exponential growth in the degree. As a consequence, we obtain a junta theorem for low-degree polynomials. Using similar ideas, we also study learning problems of low degree quantum observables and Bohr's radius phenomenon on quantum Boolean cubes. 
\end{abstract}

\subjclass[2010]{
47A30, 81P45, 06E30}

\keywords{Bohnenblust--Hille inequality, Boolean cubes, quantum observables, Pauli matrices, quantum learning, juntas, Bohr radius}

\thanks{ The research of A.V. is supported by NSF DMS-1900286, DMS-2154402 and by Hausdorff Center for Mathematics. H.Z. is supported by the Lise Meitner fellowship, Austrian Science Fund (FWF) M3337. This work is partially supported by NSF  DMS-1929284 while both authors were in residence at the Institute for Computational and Experimental Research in Mathematics in Providence, RI, during the Harmonic Analysis and Convexity program.}

\maketitle

\section{Introduction}
In 1930, Littlewood \cite{littlewood30} proved that for any $n\ge 1$ and any bilinear form $B:\com^n\otimes \com^n\to \com$, we have
\begin{equation}\label{ineq:littlewood}
	\left(\sum_{i,j}|B(e_i,e_j)|^{4/3}\right)^{3/4}\le \sqrt{2}\|B\|,
\end{equation}
where $\{e_j,1\le j\le n\}$ is the canonical basis of $\com^n$ and $\|B\|$ denotes the norm of the bilinear form, i.e. 
\begin{equation*}
	\|B\|:=\sup\{|B(x,y)|:x,y\in \com^n, \|x\|_\infty\le 1, \|y\|_\infty\le 1\}.
\end{equation*}
Here $4/3$ is optimal and \eqref{ineq:littlewood} is known as {\em  Littlewood's 4/3 inequality}. 
Right after Littlewood's proof of \eqref{ineq:littlewood}, Bohnenblust and Hille \cite{bohnenblust1931absolute} extended this result to multilinear forms: For any $d\ge 1$, there exists a constant $C_d>0$ depending only on $d$ such that for any $n\ge 1$ and any $d$-linear form $B:\com^n\times \cdots \times \com^n\to \com$ we have
\begin{equation}\label{ineq:bh 1931}
	\left(\sum_{i_1,\dots, i_d=1}^{n}|B(e_{i_1},\dots, e_{i_d})|^{\frac{2d}{d+1}}\right)^{\frac{d+1}{2d}}
	\le C_d\|B\|,
\end{equation}
where $\|B\|$ is defined in a similar way as above and the exponent $2d/(d+1)$ is optimal. The inequalities \eqref{ineq:bh 1931} have played a key role in Bohnenblust and Hille's solution \cite{bohnenblust1931absolute} to {\em Bohr's strip problem} \cite{bohr13} concerning the convergence of Dirichlet series. 
Such multilinear form inequalities \eqref{ineq:bh 1931} and their polynomial variants (which we shall recall for Boolean cubes) are known as  {\em Bohnenblust--Hille inequalities}. 

Since then, Bohnenblust--Hille inequalities have been extended to different contexts. Recent years have seen great progress in improving the constants in Bohnenblust--Hille inequalities (e.g. $C_d$ in \eqref{ineq:bh 1931}) and this has led to the resolution of a number of open problems in harmonic analysis. See for example \cite{defant2011bohnenblust,defant2014bh,bayart2014bohr,defant19book} and references therein. 

In \cite{blei} Blei extended Bohnenblust--Hille inequalities to polynomials on Boolean cubes with dimension-free constants.
Recently, this result was revisited by Defant, Masty\l o and P\'erez \cite{defant2019fourier}. Moreover, they proved that the dimension-free Bohnenblust--Hille constants for Boolean cubes actually grow at most subexponentially in the degree. To state their results, recall that any function 
$f:\{-1,1\}^n\to \com$ has the {\em Fourier--Walsh expansion}:
\begin{equation*}
	f(x)=\sum_{S\subset [n]}\widehat{f}(S)\chi_S(x),
\end{equation*}
where for each $S\subset [n]:=\{1,\dots,n\}$, $\widehat{f}(S)\in \com$ and
$$\chi_S(x):=\prod_{j\in S}x_j,\qquad x=(x_1,\dots, x_n).$$ 
The function $f$ is said to be {\em of degree at most $d$} if $\widehat{f}(S)=0$ whenever $|S|>d$; and is said to be {\em $d$-homogeneous} if $\widehat{f}(S)=0$ whenever $|S|\neq d$. Defant, Masty\l o and P\'erez proved  the following theorem (they considered real-valued functions but the proof works for complex-valued case as well). 

\begin{thm}\cite[Theorem 1]{defant2019fourier}\label{thm:discrete bh}
	For any $d\ge 1$, there exists $C_d>0$ such that for any $n\ge 1$ and any $f:\{-1,1\}^n\to \com$ of degree at most $d$, we have 
	\begin{equation}\label{ineq:bh discrete}
		\left(\sum_{|S|\le d}|\widehat{f}(S)|^{\frac{2d}{d+1}}\right)^{\frac{d+1}{2d}}\le C_d \|f\|_\infty.
	\end{equation} 
Denoting $\textnormal{BH}^{\le d}_{\{\pm 1\}}$ the best constant $C_d$ such that \eqref{ineq:bh discrete} holds, then there exists $C>0$ such that $\textnormal{BH}^{\le d}_{\{\pm 1\}}\le C^{\sqrt{d\log d}}$. So the Bohnenblust--Hille constant $\textnormal{BH}^{\le d}_{\{\pm 1\}}$ is of subexponential growth. 
\end{thm}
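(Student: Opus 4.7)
My plan is to reduce the polynomial statement to a multilinear Bohnenblust--Hille inequality on the Boolean cube via polarization, and then prove the multilinear version by an inductive scheme that combines a Blei-type inequality with hypercontractivity on $\{-1,1\}^n$.

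For the reduction, I would split $f=\sum_{k=0}^{d} f_k$ into its $k$-homogeneous parts. The projection $f\mapsto f_k$ is bounded in $L^\infty$-norm (up to an absolute constant) via the standard Rademacher averaging $f_k(x)=\bE_\eps[\eps^{-k} f(\eps x)]$ with $\eps$ a sign variable, and the triangle inequality in $\ell^{2d/(d+1)}$ together with the elementary inclusion $\ell^{2k/(k+1)}\subset \ell^{2d/(d+1)}$ costs only a factor polynomial in $d$. It then suffices to prove the inequality for a $k$-homogeneous $f$. To such an $f$ one associates its unique symmetric $k$-linear form $L_f$ on $(\com^n)^k$; a classical polarization identity yields $\|L_f\|_{(\{-1,1\}^n)^k}\le (k^k/k!)\,\|f\|_\infty\le e^{k}\|f\|_\infty$, so (modulo factors exponential in $k$) everything reduces to the multilinear Bohnenblust--Hille inequality on the Boolean cube.

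The heart of the proof is an induction on the number of linear slots. Let $M_k$ denote the best multilinear BH constant in degree $k$ on the Boolean cube. Applying the Blei--Fournier inequality
\begin{equation*}
\Big(\sum_{\mathbf{i}} |a_{\mathbf{i}}|^{\frac{2k}{k+1}}\Big)^{\frac{k+1}{2k}}\le \prod_{\ell=1}^{k}\Big(\sum_{i_\ell}\Big(\sum_{\widehat{i_\ell}} |a_{\mathbf{i}}|^2\Big)^{1/2}\Big)^{1/k}
\end{equation*}
to the coefficients of $L_f$ and using Parseval to rewrite each inner $\ell^2$-sum as the $L^2$-norm of a $(k-1)$-linear form obtained by freezing the $\ell$-th variable, the outer $\ell^1$-sums can be absorbed by the degree-$(k-1)$ BH inequality applied in an $L^p$ version with mild constants. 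Hypercontractivity on the Boolean cube (Bonami's lemma) is then invoked to pass from $L^p$ back to $L^\infty$, giving a recursion of the form $M_k\le M_{k-1}\cdot C_k$ whose product one wants to be subexponential in $d$.

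The main obstacle is exactly this last point: a naive bookkeeping yields $M_d\le C^d$, and the improvement to $C^{\sqrt{d\log d}}$ requires a delicate balancing act. The strategy is to first establish an $L^p$-variant of BH with constants that are only polynomial in $d$ (in the spirit of Bayart), and then to invoke hypercontractivity once at the end, optimizing the parameter $p=p(d)$ so that the product of the polynomial-in-$d$ loss from the $L^p$-BH inequality and the $(q-1)^{d/2}$-type loss from hypercontractivity is minimized; the optimizer scales so that $\log p\sim 1/\sqrt{d\log d}$, which produces the subexponential factor $\exp(O(\sqrt{d\log d}))$. Tracking the constants carefully along the induction and performing this interpolation optimization is the technical crux; everything else is essentially a clean combination of Littlewood-type multilinear estimates with Boolean harmonic analysis.
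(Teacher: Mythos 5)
First, note that the paper offers no proof of this statement: Theorem \ref{thm:discrete bh} is quoted verbatim from \cite{defant2019fourier} and used as a black box, so your proposal can only be measured against the argument of Defant, Masty\l o and P\'erez (which in turn follows the Bayart--Pellegrino--Seoane scheme). At the level of ingredients you have identified the right toolbox --- a Blei--Fournier mixed-norm inequality, polarization, hypercontractivity on the Boolean cube, and a final optimization of a splitting parameter --- and an argument along your lines does yield a dimension-free constant of order $C^d$.

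However, the route as you describe it cannot produce the subexponential bound $C^{\sqrt{d\log d}}$, for two concrete reasons. (i) Your reduction to the homogeneous case is already exponentially lossy. The formula $f_k(x)=\bE_\eps[\eps^{-k}f(\eps x)]$ with a single sign $\eps$ only separates the even and odd parts of $f$, not the individual degrees; to isolate one homogeneous component one must combine several noise operators $T_\rho$, and by a Chebyshev--Markov duality the resulting projection has $L^\infty\to L^\infty$ norm of order $2^d$ in the worst case, not an absolute constant. (ii) Full polarization of a $d$-homogeneous polynomial costs $d^d/d!\sim e^d$. Once factors $e^{cd}$ have been paid in the reduction, no subsequent optimization of $p$ or of the hypercontractive parameter can bring the final constant below exponential; your own phrase ``modulo factors exponential in $k$'' concedes exactly the quantity the theorem is about. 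The actual proof avoids both losses: it applies the Blei-type inequality directly to the coefficients $\widehat{f}(S)$ of the polynomial itself, splitting each index set into a block of size $k$ and one of size $d-k$, invokes the multilinear Bohnenblust--Hille inequality (whose constant grows at most polynomially in the degree) together with polarization only over the small block, and controls the large block by a hypercontractive $L^p$--$L^2$ comparison; choosing $k\sim\sqrt{d/\log d}$ makes the polarization loss $e^{O(\sqrt{d/\log d})}$ and the hypercontractive loss $C^{\sqrt{d\log d}}$, which is where the exponent $\sqrt{d\log d}$ really comes from. The non-homogeneous case then requires separate care in \cite{defant2019fourier} precisely because of the exponential projection constants in (i).
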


As we mentioned, Blei \cite{blei} first proved $\textnormal{BH}^{\le d}_{\{\pm 1\}}<\infty$. One of main contributions of Defant, Masty\l o and P\'erez \cite{defant2019fourier} lies in this subexponential bound $\textnormal{BH}^{\le d}_{\{\pm 1\}}\le C^{\sqrt{d\log d}}$. Recently, this Bohnenblust--Hille inequality for Boolean cubes \eqref{ineq:bh discrete} has found great applications in learning bounded low-degree functions on Boolean cubes \cite{eskenazis2022learning}, which will be explained in more detail in Section \ref{sect:learning}. In view of this, an analogue of \eqref{ineq:bh discrete} for qubit systems was conjectured in \cite{RWZ22}, motivated by learning quantum observables following the work of Eskenazis and Ivanisvili \cite{eskenazis2022learning}. But actually this quantum analogue of Bohnenblust--Hille inequality was already contained in a result of Huang, Chen and Preskill in the preprint \cite{CHP} that was not online available when the conjecture was made.  
Their motivation is to predict any quantum process. In this paper, we provide another proof that is simpler and gives better constants. Moreover, our method is more general which allows us to reduce many problems on the qubit systems to classical Boolean cubes. We refer to Section \ref{sect:reduction} for more discussions and to Section \ref{sect:diss} for the comparison on our results and the work in \cite{CHP}.

In our quantum setup, the Boolean cubes $\{-1,1\}^n$ are replaced by $M_2(\com)^{\otimes n}$, the $n$-fold tensor product of 2-by-2 complex matrix algebras. Recall that Pauli matrices and the identity matrix
\begin{equation*}
	\sigma_0=\begin{pmatrix}1&0\\0&1\end{pmatrix},\quad \sigma_1=\begin{pmatrix}0&1\\1&0\end{pmatrix},\quad \sigma_2=\begin{pmatrix}0&-i\\i&0\end{pmatrix},\quad \sigma_3=\begin{pmatrix}1&0\\0&-1\end{pmatrix},
\end{equation*}
 form a basis of $M_2(\com)$. For $\bs=(s_1,\dots, s_n)\in\{0,1,2,3\}^n$, we put
 \begin{equation*}
 	\sigma_\bs:=\sigma_{s_1}\otimes\dots\otimes \sigma_{s_n}\, .
 \end{equation*}
 All the $\sigma_{\bs}, \bs\in \{0,1,2,3\}^n$ form a basis of $M_2(\com)^{\otimes n}$ and play the role of characters $\chi_S,S\in [n]$ in the classical case. So any $A\in M_2(\com)^{\otimes n}$ has the unique Fourier expansion
 \begin{equation*}
 	A=\sum_{\bs\in\{0,1,2,3\}^n} \widehat{A}_\bs \,\sigma_\bs
 \end{equation*}
 with  $\widehat{A}_\bs\in \com$ being the Fourier coefficient. For any  $\bs=(s_1,\dots, s_n)\in\{0,1,2,3\}^n$, we denote by $|\bs|$ the number of non-zero $s_j$'s. Similar to the classical setting, $A\in M_2(\com)^{\otimes n}$ is {\em of degree at most $d$}   if $\widehat{A}_\bs =0$ whenever $|\bs|>d$, and it is {\em $d$-homogeneous} if $\widehat{A}_\bs=0$ whenever $|\bs|\neq d$. 
 
 In the sequel, we always use $\|A\|$ to denote the operator norm of $A$. Our main result is the following:
 
\begin{thm}\label{thm:quantum bh}
	For any $d\ge 1$, there exists $C_d>0$ such that for all $n\ge 1$ and all $\pol=\sum_{|\bs|\le d}\fc_{\bs}\sigma_{\bs}\in M_2(\com)^{\otimes n}$ of degree at most $d$, we have 
	\begin{equation}\label{ineq:quantum bh}
		\left(\sum_{|\bs|\le d}|\fc_{\bs}|^{\frac{2d}{d+1}}\right)^{\frac{d+1}{2d}}
		\le C_d \|\pol\|.
	\end{equation}
	Moreover, denote $\textnormal{BH}^{\le d}_{M_2(\com)}$ the best constant $C_d>0$. Then we have $\textnormal{BH}^{\le d}_{M_2(\com)}\le 3^d \textnormal{BH}^{\le d}_{\{\pm 1\}}$, so that it is at most of exponential growth.
\end{thm}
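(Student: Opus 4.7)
The plan is to reduce \eqref{ineq:quantum bh} to the classical Bohnenblust--Hille inequality \eqref{ineq:bh discrete} by testing $\pol$ on carefully chosen product states built from Pauli eigenvectors. For each $k\in\{1,2,3\}$ let $|v_k^{\pm}\rangle\in\com^2$ denote orthonormal eigenvectors of $\sigma_k$ with eigenvalues $\pm1$. For $\mathbf{k}=(k_1,\dots,k_n)\in\{1,2,3\}^n$ and $\boldsymbol{\eps}=(\eps_1,\dots,\eps_n)\in\{-1,1\}^n$, form the product state $|v_{\mathbf{k}}^{\boldsymbol{\eps}}\rangle:=\bigotimes_{j=1}^n|v_{k_j}^{\eps_j}\rangle$ and define
\[
g_{\mathbf{k}}(\boldsymbol{\eps}):=\langle v_{\mathbf{k}}^{\boldsymbol{\eps}}|\pol|v_{\mathbf{k}}^{\boldsymbol{\eps}}\rangle.
\]
Since $|v_{\mathbf{k}}^{\boldsymbol{\eps}}\rangle$ is a unit vector, $\|g_{\mathbf{k}}\|_\infty\le\|\pol\|$.

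The key pointwise identity is $\langle v_{k_j}^{\eps_j}|\sigma_{s_j}|v_{k_j}^{\eps_j}\rangle=1$ if $s_j=0$, equals $\eps_j$ if $s_j=k_j$, and vanishes otherwise (because distinct nonzero Pauli matrices anticommute, so the off-diagonal $\sigma_{s_j}$ has zero diagonal in the $\sigma_{k_j}$-eigenbasis). Expanding $\pol=\sum_{\bs}\fc_\bs\sigma_\bs$ and multiplying site-by-site,
\[
g_{\mathbf{k}}(\boldsymbol{\eps})=\sum_{S\subset[n],\,|S|\le d}\fc_{\bs(S,\mathbf{k})}\,\chi_S(\boldsymbol{\eps}),
\]
where $\bs(S,\mathbf{k})\in\{0,1,2,3\}^n$ has $j$-th entry $k_j$ for $j\in S$ and $0$ for $j\notin S$. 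Thus $g_{\mathbf{k}}$ is a (complex-valued) degree-$d$ function on the Boolean cube, and Theorem~\ref{thm:discrete bh} gives
\[
\sum_{|S|\le d}|\fc_{\bs(S,\mathbf{k})}|^{\frac{2d}{d+1}}\le\left(\textnormal{BH}^{\le d}_{\{\pm1\}}\right)^{\frac{2d}{d+1}}\|\pol\|^{\frac{2d}{d+1}}.
\]

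Now I sum over $\mathbf{k}\in\{1,2,3\}^n$. For each fixed $\bs$ with $|\bs|=k\le d$, the equation $\bs(S,\mathbf{k})=\bs$ forces $S=\mathrm{supp}(\bs)$ and $k_j=s_j$ on $S$, while the $n-k$ coordinates off $\mathrm{supp}(\bs)$ leave $k_j\in\{1,2,3\}$ free; so each $\bs$ is hit by exactly $3^{n-|\bs|}$ pairs $(S,\mathbf{k})$. Summing the previous inequality over $\mathbf{k}$ and swapping the order of summation yields
\[
\sum_{|\bs|\le d}3^{n-|\bs|}|\fc_\bs|^{\frac{2d}{d+1}}\le 3^n\left(\textnormal{BH}^{\le d}_{\{\pm1\}}\right)^{\frac{2d}{d+1}}\|\pol\|^{\frac{2d}{d+1}}.
\]
Using $3^{n-|\bs|}\ge 3^{n-d}$ and taking $(d+1)/(2d)$-th powers delivers \eqref{ineq:quantum bh} with $C_d\le 3^{(d+1)/2}\,\textnormal{BH}^{\le d}_{\{\pm1\}}$, comfortably within the claimed $3^d\,\textnormal{BH}^{\le d}_{\{\pm1\}}$.

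There is no serious obstacle here: the argument is a quantum-to-classical reduction where the $3^n$ Pauli product eigenbases jointly encode every Fourier coefficient of $\pol$, and the price of this overcounting is exactly the exponential factor $3^{(d+1)/2}$ (one Pauli direction per coordinate in $\mathrm{supp}(\bs)$, freely chosen outside). The only subtlety is verifying the diagonal matrix-element formula above, which is immediate from the eigenvalue structure of Pauli matrices; everything else is the fiber-counting over the map $(\mathbf{k},S)\mapsto\bs(S,\mathbf{k})$ and a single application of the classical inequality \eqref{ineq:bh discrete}.
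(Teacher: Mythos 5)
Your proof is correct, and it takes a route that differs in an interesting way from the paper's. The paper performs the quantum-to-classical reduction with a \emph{single} auxiliary function $f_A$ on the enlarged cube $\{-1,1\}^{3n}$, obtained by testing $A$ against \emph{mixed} product states $\rho(\vec\eps)=\bigotimes_j\rho_j$ where each $\rho_j$ is the uniform mixture of the three rank-one projections onto Pauli eigenvectors; the mixing is what produces the damping factors $3^{-l}$ on the Fourier coefficients, which are then compensated by pulling out a factor $3^d$ at the start. You instead test $A$ against \emph{pure} product states, obtaining a family of $3^n$ degree-$d$ functions $g_{\mathbf k}$ on $\{-1,1\}^n$ whose Fourier coefficients are the $\fc_{\bs}$ themselves (undamped), apply the classical inequality to each, and recover the full $\ell_{2d/(d+1)}$ sum by counting the fibers of $(\mathbf k,S)\mapsto\bs(S,\mathbf k)$. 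Both arguments rest on the same diagonal matrix-element identity $\langle\sigma_j e^k_\eps,e^k_\eps\rangle=\delta_{jk}\eps$ (Lemma~\ref{lem: fact}), but your averaging over bases buys a quantitatively better constant: since $3^{n-|\bs|}\ge 3^{n-d}$, you get $C_d\le 3^{(d+1)/2}\,\textnormal{BH}^{\le d}_{\{\pm1\}}$, which improves on the paper's $3^d\,\textnormal{BH}^{\le d}_{\{\pm1\}}$ (both of course remain exponential in $d$). All the steps check out: $g_{\mathbf k}$ is well defined of degree $d$ with $\|g_{\mathbf k}\|_\infty\le\|A\|$ since the test vectors are unit vectors, the map $S\mapsto\bs(S,\mathbf k)$ is injective for fixed $\mathbf k$ so no Fourier coefficients collide, and the fiber count $3^{n-|\bs|}$ is exact.
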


A special choice of  polynomials yields noncommutative analogues of Bohnenblust--Hille inequalities for multilinear forms. For this we use the following notation. Fix $n\ge 1$. For $\kappa\in\{1,2,3\}$ and $i\in [n]$, we write $\sigma^{(\kappa)}_i$ for $\sigma_{\bs}$ where $\bs=(s_1,\dots, s_n)\in\{0,1,2,3\}^n$ with $s_i=\kappa$ and $s_j=0$ whenever $j\neq i$.
\begin{cor}\label{cor:multilinear}
	Fix $d\ge 1$. Then there exists $C_d>0$ such that for any $n\ge 1$ and any (each $ \sigma^{(\kappa_j)}_{i_j}\in M_2(\com)^{\otimes n}$)
	$$A:=\sum_{\kappa_1,\dots, \kappa_d\in \{1,2,3\}}\sum_{i_1,\dots, i_d=1}^{n}a^{\kappa_1,\dots, \kappa_d}_{i_1,\dots, i_d}\sigma^{(\kappa_1)}_{i_1}\otimes \cdots \otimes \sigma^{(\kappa_d)}_{i_d},$$
	we have 
	\begin{equation*}
	\left(\sum_{\kappa_1,\dots, \kappa_d\in \{1,2,3\}}\sum_{i_1,\dots, i_d=1}^{n}|a^{\kappa_1,\dots, \kappa_d}_{i_1,\dots, i_d}|^{\frac{2d}{d+1}}\right)^{\frac{d+1}{2d}}
	\le C_d	\|A\|.
	\end{equation*}
	Moreover, $C_d\le 3^d \textnormal{BH}^{\le d}_{\{\pm 1\}}$, and it becomes a noncommutative analogue of Littlewood's 4/3 inequality when $d=2$.
\end{cor}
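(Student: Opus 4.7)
The plan is to realize the corollary as a direct specialization of Theorem \ref{thm:quantum bh} applied in a larger tensor power. The element $A$ lives in $(M_2(\com)^{\otimes n})^{\otimes d}=M_2(\com)^{\otimes N}$ where $N:=nd$, and I will simply identify it as a $d$-homogeneous noncommutative polynomial in that bigger system and read off its Fourier expansion.

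Concretely, for each multi-index $(i_1,\kappa_1,\dots,i_d,\kappa_d)\in ([n]\times\{1,2,3\})^d$, the tensor
$$\sigma^{(\kappa_1)}_{i_1}\otimes \cdots \otimes \sigma^{(\kappa_d)}_{i_d}\in M_2(\com)^{\otimes N}$$
is by definition the Pauli word $\sigma_{\bs}$ where $\bs=(s_1,\dots,s_N)\in\{0,1,2,3\}^N$ has $s_{(j-1)n+i_j}=\kappa_j$ for $j=1,\dots,d$ and $s_k=0$ for all other coordinates $k$. Since $\kappa_j\in\{1,2,3\}$, each such word has exactly $|\bs|=d$, and the assignment $(i_1,\kappa_1,\dots,i_d,\kappa_d)\mapsto \bs$ is injective because the positions $(j-1)n+i_j$ lie in disjoint blocks $[(j-1)n+1,jn]$ as $j$ varies. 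Consequently, viewed in $M_2(\com)^{\otimes N}$, $A$ is $d$-homogeneous, and its Fourier coefficients are
$$\widehat{A}_{\bs}=\begin{cases} a^{\kappa_1,\dots,\kappa_d}_{i_1,\dots,i_d}, & \text{if }\bs\text{ arises from some }(i_1,\kappa_1,\dots,i_d,\kappa_d), \\ 0, & \text{otherwise.}\end{cases}$$

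Now I apply Theorem \ref{thm:quantum bh} to $A\in M_2(\com)^{\otimes N}$ with degree $d$. The left-hand side of \eqref{ineq:quantum bh} collapses, because of the vanishing of all other Fourier coefficients, to
$$\left(\sum_{\kappa_1,\dots,\kappa_d\in\{1,2,3\}}\sum_{i_1,\dots,i_d=1}^{n}\bigl|a^{\kappa_1,\dots,\kappa_d}_{i_1,\dots,i_d}\bigr|^{\frac{2d}{d+1}}\right)^{\frac{d+1}{2d}},$$
which is bounded by $C_d\,\|A\|$ with exactly the same constant $C_d\le 3^d\,\textnormal{BH}^{\le d}_{\{\pm 1\}}$ delivered by Theorem \ref{thm:quantum bh}. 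In the special case $d=2$ the exponent becomes $4/3$, which is the noncommutative analog of Littlewood's $4/3$ inequality.

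There is essentially no obstacle here beyond bookkeeping. The only point that deserves care is checking that the map from multi-indices to Pauli words is injective, so that no collapsing of coefficients occurs on the left-hand side; this is handled by placing the $d$ factors into disjoint blocks of coordinates within $M_2(\com)^{\otimes N}$, as above. Everything else, including the constant, is inherited verbatim from Theorem \ref{thm:quantum bh}.
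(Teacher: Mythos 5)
Your proof is correct and takes the same route the paper intends: the paper gives only the one-line remark that a special choice of polynomials yields the corollary, and your argument spells that out by embedding $A$ as a $d$-homogeneous element of $M_2(\com)^{\otimes nd}$ and applying Theorem~\ref{thm:quantum bh} with $nd$ qubits. The block structure you point out ensures that the multi-index-to-Pauli-word map is injective, so the Fourier coefficients of $A$ in $M_2(\com)^{\otimes nd}$ are exactly the $a^{\kappa_1,\dots,\kappa_d}_{i_1,\dots,i_d}$, and since the constant in Theorem~\ref{thm:quantum bh} is dimension-free it is inherited unchanged.
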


\begin{rem}
	Note that the algebra of function on $\{-1,1\}^n$ can be viewed as a commutative subalgebra of $M_2(\com)^{\otimes n}$ spanned by $\sigma_{\bs},\bs\in\{0,3\}^n$. So \eqref{ineq:bh discrete} is a special case of \eqref{ineq:quantum bh}
	and we always have $\textnormal{BH}^{\le d}_{\{\pm 1\}}\le \textnormal{BH}^{\le d}_{M_2(\com)}$. Our main result Theorem \ref{thm:quantum bh} states that the converse holds up to a factor $3^d$.
\end{rem}

\begin{rem}
	The main theorem of Huang, Chen and Preskill \cite[Theorem 5]{CHP} is actually more general
	which admits \eqref{ineq:quantum bh} as a corollary \cite[Corollary 3]{CHP}. Their proof is different from ours and the constant they obtained is $C_d\sim d^{\mathcal{O}(d)}$ which is worse. 
\end{rem}

Recall that a function $f:\{-1,1\}^n\to \com$ is called a {\em $k$-junta} if it depends on at most $k$ coordinates. Similarly, a matrix $A\in M_{2}(\com)^{\otimes n}$ is a {\em $k$-junta} if it acts non-trivially on at most $k$ qubits, that is, 
\begin{equation*}
		|\{1\le j\le n: \exists  \bs\in\{0,1,2,3\}^n \quad\textnormal{s.t.}\quad \widehat{A}_{\bs}\ne 0 \quad \& \quad s_j\neq 0\}|\le k. 
\end{equation*}
It is known that \cite{bourgain2002distribution,DFKO2007fourier} if a bounded  function $f$ over $\{-1,1\}^n$ is of low degree, then it is close to a junta. In the next corollary we derive such a result in a quantum setting. We refer to \cite[Theorem 3.9]{RWZ22} to another quantum junta type theorem related to the influences instead of the degree. 

\begin{cor}\label{cor:junta}
	Suppose that $A\in M_2(\com)^{\otimes n}$ is of degree at most $d$  and $\|A\|\le 1$. Then for any $\epsilon>0$, there exists a $k$-junta $B\in M_2(\com)^{\otimes n}$ such that 
	\begin{equation*}
		\|A-B\|_2\le \epsilon \qquad {\rm with }\qquad k\le \frac{d\left(\textnormal{BH}^{\le d}_{M_2(\com)}\right)^{2d}}{\epsilon^{2d}}.
	\end{equation*}
Here $\|\cdot\|_2$ denoted the Hilbert--Schmidt norm with respect to the normalized trace, that is, $\|A\|^2_2=2^{-n}\tr [A^\ast A]$.
In particular, we may choose $k\le dC^{2d^2}\epsilon^{-2d}$ for some universal $C>0$.
\end{cor}

\begin{rem}
	The results in \cite{bourgain2002distribution,DFKO2007fourier} are more general. However, in the case when polynomials are of low degree, the proof presented here that uses Bohnenblust--Hille inequalities is simpler. We are grateful to Alexandros Eskenazis for pointing out to us this proof. 
\end{rem}

To prove Theorem \ref{thm:quantum bh}, we reduce the problem to the (commutative) Boolean cube case, at a price of an extra factor $3^d$. In fact, our main contribution is a general method that reduces many problems in the quantum setting to the classical setting. This method will be explained in Section \ref{sect:reduction}, while the proof of Theorem \ref{thm:quantum bh} and Corollary \ref{cor:junta} will be presented  in Section \ref{sect:quantum bh}.

We shall illustrate the strength of this reduction method with two more applications. The first one concerns learning bounded low-degree quantum observables which will be Section \ref{sect:learning} and the main result is  Theorem \ref{thm:learning}. The second one is  Theorem \ref{thm:boolean radius} on Bohr's radius phenomenon in the context of quantum Boolean cubes which will be discussed in Section \ref{sect:bohr's radius}. 
	
In Section \ref{sect:diss}, we briefly compare our results with the work of Huang, Chen and Preskill \cite{CHP}.

\medskip

\textbf{Notation}. We shall use $\tr$ to denote the usual (unnormalized) trace on matrix algebras, and $\langle \cdot, \cdot\rangle$ the inner product on $\com^n$ that is linear in the second argument. By  $\|A\|_p$ of a $k$-by-$k$ matrix $A$ we always mean the normalized Schatten-$p$ norm, i.e. $\|A\|_p^p=2^{-k}\tr |A|^p$. For any unit vector $\eta\in \com^n$, we use $\ket{\eta}\bra{\eta}$ to denote the associated rank one projection operator. Sometimes people use the convention $\eta\otimes \eta$ instead. By a density matrix we mean a positive semi-definite matrix with unit trace.

\subsection*{Acknowledgement} 
We are grateful to Hsin-Yuan Huang, Sitan Chen and John Preskill for sharing their preprint and for their helpful comments on an earlier version of this paper, especially Section \ref{sect:diss}.  We also would like to thank Alexandros Eskenazis for Corollary \ref{cor:junta} and the anonymous referee for valuable comments and remarks. 

\section{Reduction to the commutative case}
\label{sect:reduction}

In this section, we present a general reduction method. For this let us collect a few facts about Pauli matrices. For each $j=1,2,3$, $\sigma_j$ is self-adjoint unitary, and has $1$ and $-1$ as eigenvalues. We denote by $e^j_1$ and $e^j_{-1}$ the corresponding unit eigenvectors, respectively. Pauli matrices $\sigma_j, j=1,2,3$ satisfy the following anticommutation relation:
\begin{equation}\label{eq:anticommutative}
	\sigma_j\sigma_k+\sigma_k\sigma_j=0, \qquad j\neq k\in\{1,2,3\}.
\end{equation}
We record the following simple fact as a lemma. 

\begin{lem}\label{lem: fact}
	For any $j, k\in\{1,2,3\}$ and $\epsilon\in\{-1,1\}$, we have 
	\begin{equation}\label{eq:key}
		\langle \sigma_j e^{k}_{\epsilon},e^{k}_{\epsilon}\rangle=\delta_{jk}\epsilon.
	\end{equation}
\end{lem}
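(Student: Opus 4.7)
The plan is to split on whether $j=k$ or $j\neq k$ and handle each case with a short direct argument; nothing here is deep.

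For the diagonal case $j=k$, I would simply invoke the definition of $e^k_\epsilon$ as the unit eigenvector of $\sigma_k$ with eigenvalue $\epsilon$, writing
\begin{equation*}
\langle \sigma_j e^{k}_{\epsilon},e^{k}_{\epsilon}\rangle=\langle \sigma_k e^{k}_{\epsilon},e^{k}_{\epsilon}\rangle=\epsilon\langle e^{k}_{\epsilon},e^{k}_{\epsilon}\rangle=\epsilon,
\end{equation*}
which matches $\delta_{kk}\epsilon=\epsilon$.

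For the off-diagonal case $j\neq k$, I would use the anticommutation relation \eqref{eq:anticommutative} together with the self-adjointness of $\sigma_k$. Since $\sigma_k e^{k}_{\epsilon}=\epsilon e^{k}_{\epsilon}$ and $\epsilon\in\{-1,1\}$ satisfies $\epsilon^2=1$, one can move a $\sigma_k$ onto the right factor and then slide it through $\sigma_j$ at the cost of a sign:
\begin{equation*}
\langle \sigma_j e^{k}_{\epsilon},e^{k}_{\epsilon}\rangle=\epsilon\langle \sigma_j e^{k}_{\epsilon},\sigma_k e^{k}_{\epsilon}\rangle=\epsilon\langle \sigma_k\sigma_j e^{k}_{\epsilon}, e^{k}_{\epsilon}\rangle=-\epsilon\langle \sigma_j\sigma_k e^{k}_{\epsilon}, e^{k}_{\epsilon}\rangle=-\langle \sigma_j e^{k}_{\epsilon}, e^{k}_{\epsilon}\rangle,
\end{equation*}
forcing the inner product to vanish, in agreement with $\delta_{jk}\epsilon=0$.

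There is no genuine obstacle: the only thing to be careful about is the direction in which the eigenvalue equation is applied (using self-adjointness of $\sigma_k$ on the right-hand entry of the inner product) and the sign produced by anticommutation. Both cases together yield the compact identity $\langle \sigma_j e^{k}_{\epsilon},e^{k}_{\epsilon}\rangle=\delta_{jk}\epsilon$.
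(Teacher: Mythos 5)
Your proof is correct and follows essentially the same route as the paper: the diagonal case by the eigenvalue equation, and the off-diagonal case by combining the eigenvalue equation, self-adjointness of $\sigma_k$, and the anticommutation relation \eqref{eq:anticommutative} to conclude $\langle \sigma_j e^{k}_{\epsilon},e^{k}_{\epsilon}\rangle=-\langle \sigma_j e^{k}_{\epsilon},e^{k}_{\epsilon}\rangle=0$. The only difference is cosmetic (you start from the right-hand entry of the inner product rather than multiplying through by $\epsilon$ as the paper does), and your sign bookkeeping is consistent with the paper's convention that the inner product is linear in the second argument.
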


\begin{proof}
	When $j=k$, \eqref{eq:key} is trivial by definition of $e^k_\epsilon$. When $j\neq k$, by \eqref{eq:anticommutative} we have 
	\begin{equation*}
		\epsilon\langle \sigma_j e^{k}_{\epsilon},e^{k}_{\epsilon}\rangle
		=\langle \sigma_j \sigma_k e^{k}_{\epsilon},e^{k}_{\epsilon}\rangle
		=-\langle \sigma_k \sigma_j e^{k}_{\epsilon},e^{k}_{\epsilon}\rangle
		=-\epsilon\langle \sigma_j e^{k}_{\epsilon},e^{k}_{\epsilon}\rangle.
	\end{equation*}
	This gives \eqref{eq:key} for $j\neq k$ since $\epsilon\neq 0$.
\end{proof}

Recall that $A\in M_2(\com)^{\otimes n}$  has the form
\begin{equation}\label{eq:fourier expansion quantum 1}
	\pol=\sum_{\bs\in \{0,1,2,3\}^n}\fc_{\bs}\sigma_{\bs}.
\end{equation}
It will be more convenient for us to rewrite it as 
\begin{equation}\label{eq:fourier expansion quantum 2}
	\pol=\sum_{l\ge 0}\sum_{\kappa_1,\dots, \kappa_l\in \{1,2,3\}}\sum_{1\le i_1<\cdots <i_l\le n}a^{\kappa_1,\dots,  \kappa_l}_{i_1,\dots, i_l}\sigma^{\kappa_1,\dots,  \kappa_l}_{i_1,\dots, i_l}
\end{equation}
where to each $(l;\kappa_1,\dots, \kappa_l;i_1<\cdots<i_l)$, we associate it with 
$\bs=(s_1,\dots, s_n)\in \{0,1,2,3\}^n$ of length $|\bs|=l$ with
\begin{equation*}
	s_{k}=
	\begin{cases*}
		\kappa_j,&$ k=i_j, 1\le j\le l$\\
		0,& \textnormal{otherwise}
	\end{cases*}
\end{equation*}
so that 
$$\fc_{\bs}=a^{\kappa_1,\dots,  \kappa_l}_{i_1,\dots, i_l}\qquad \textnormal{and} \qquad \sigma_{\bs}=\sigma^{\kappa_1,\dots,  \kappa_l}_{i_1,\dots, i_l}.$$
In other words, $\sigma^{\kappa_1,\dots,  \kappa_l}_{i_1,\dots, i_l}$ is defined as 
$$\sigma^{\kappa_1,\dots,  \kappa_l}_{i_1,\dots, i_l}:=\cdots\otimes  \sigma_{\kappa_1}\otimes \cdots \otimes \sigma_{\kappa_l}\otimes \cdots,$$
where $\sigma_{\kappa_j}$ appears in the $i_j$-th place for each $1\le j\le l$, and all other $(n-l)$ components are simply identity matrices $\sigma_0$.  

The main ingredient of our method is the following Proposition \ref{key prop}. To make the statement brief, we shall use the map
\begin{equation*}
	q=q_n:\{\bs\in\{0,1,2,3\}^n\}\to \{S\subset [3n]\},
\end{equation*}
that realizes the above identification of \eqref{eq:fourier expansion quantum 1} and \eqref{eq:fourier expansion quantum 2}.  If $\bs=\bf{0}$ is the $0$ vector, then we define $q(\bf{0}):=\emptyset$. Each $\bs=(s_1,\dots, s_n)\in \{0,1,2,3\}^n$ with $1\le |\bs|=l\le n$ is assigned to a tuple $(\kappa_1,\dots, \kappa_l;i_1<\cdots< i_l)\in  [3]^l\times [n]^l$ with $s_{i_j}=\kappa_j$. Then we define 
	\begin{equation*}
		q(\bs):=\{n(\kappa_j-1)+i_j:1\le j\le l\}=:S.
	\end{equation*}
	For example, if $n=5$ and $\bs =(0,1,2,3,1)$, then 
	\begin{equation*}
		q(\bs)=S=\{2, 5+3, 10+4, 5\}\subset [15].
	\end{equation*}
	Note that $|S|=|\bs |=l$. So 
	\begin{equation*}
		q\left(\{\bs\in\{0,1,2,3\}^n:|\bs|\le d\}\right)\subset \{S\subset [3n]:|S|\le d\}.
	\end{equation*}
	Moreover, this map $q:\{0,1,2,3\}^n\to \{S\subset [3n]\}$ is injective but not surjective. We denote by $p=p_n$ its inverse over $\im (q)\subset 2^{[3n]}$ (here and in what follows, we use $2^{[3n]}$ to denote the family of subsets of $[3n]$)
	$$
	\bs =p(S), \quad S\in \im (q)\subset 2^{[3n]}.
	$$
	Therefore, the above formulae \eqref{eq:fourier expansion quantum 1} and \eqref{eq:fourier expansion quantum 2} can also be rewritten as 
	\begin{equation}\label{eq:fourier expansion quantum 3}
		A=\sum_{S\in \im (q)\subset 2^{[3n]}} \widehat{A}_{p(S)}\sigma_{p(S)}.
	\end{equation}

	\begin{prop}\label{key prop}
		Fix $n\ge 1$. There exists a family 
		\begin{equation*}
			\mathcal{S}=\mathcal{S}_n=\{\rho(\vec{\epsilon}):\vec{\epsilon}\in \{-1,1\}^{3n}\}
		\end{equation*}
		of density matrices in $M_2(\com)^{\otimes n}$ such that the following holds: For any $A\in M_2(\com)^{\otimes n }$, the function $f_A:\{-1,1\}^{3n}\to \com$  defined by 
		\begin{equation*}
			f_A(\vec{\epsilon})=\tr [A\rho(\vec{\epsilon})],\qquad \vec{\epsilon}\in \{-1,1\}^{3n}
		\end{equation*}
		satisfies
		\begin{equation}\label{eq:key prop identity}
			\widehat{f}_A(S)=
			\begin{cases*}
				3^{-|S|}\widehat{A}_{p(S)}, & $S\in \im (q)\subset 2^{[3n]}$\\
				0,& $S\in 2^{[3n]}\setminus \im (q)$
			\end{cases*}.
		\end{equation}
		In particular, $\|f_A\|_\infty\le \|A\|$, $\deg(f_A)=\deg(A)$ and for all $d\ge 0$
		\begin{equation}\label{ineq:comparison of lp norm}
			\left(\sum_{\bs\in \{0,1,2,3\}^n:|\bs|\le d}|\widehat{A}_{\bs}|^{r}\right)^{1/r}\le 3^d\left(\sum_{S\subset [3n]:|S|\le d}	|\widehat{f}_A(S)|^{r}\right)^{1/r},  0< r<\infty.
		\end{equation}
	\end{prop}
	\begin{proof}
		For  any 
		$$\vec{\epsilon}:=\left(\epsilon^{(1)}_1,\dots, \epsilon^{(1)}_n,\epsilon^{(2)}_1,\dots, \epsilon^{(2)}_n,\epsilon^{(3)}_1,\dots, \epsilon^{(3)}_n\right)\in \{-1,1\}^{3n},$$ 
		the matrix $\rho(\vec{\epsilon})$ is defined as follows
		\begin{equation*}
			\rho:=\rho(\vec{\epsilon})=\rho_1\otimes \cdots \otimes \rho_n \in M_2(\com)^{\otimes n}, 
		\end{equation*}
		where for each $1\le j\le n$
		\begin{equation*}
			\rho_j:=\rho_j(\vec{\epsilon})=\frac{1}{3}\ket{e^1_{\epsilon^{(1)}_j}}\bra{e^1_{\epsilon^{(1)}_j}}+\frac{1}{3}\ket{e^{2}_{\epsilon^{(2)}_j}}\bra{e^2_{\epsilon^{(2)}_j}}+\frac{1}{3}\ket{e^3_{\epsilon^{(3)}_j}}\bra{e^3_{\epsilon^{(3)}_j}}.
		\end{equation*}
		Recall for any $\kappa\in \{1,2,3\}$ and $\epsilon\in\{-1,1\}$, $e^\kappa_{\epsilon}$ is a unit vector. Hence each $\rho_j(\vec{\epsilon})$ is positive semi-definite with trace $1$. So $\rho(\vec{\epsilon})$ is also a density matrix. 
		To prove \eqref{eq:key prop identity}, let us employ the notation \eqref{eq:fourier expansion quantum 2}. Then for $\rho=\rho(\vec{\epsilon})$ defined as above 
		\begin{align*}
			\tr[\pol\rho]=\sum_{l\ge 0}\sum_{\kappa_1,\dots, \kappa_l\in \{1,2,3\}}\sum_{1\le i_1<\cdots <i_l\le n}a^{\kappa_1,\dots,  \kappa_l}_{i_1,\dots, i_l} \tr\left[\sigma^{\kappa_1,\dots,  \kappa_l}_{i_1,\dots, i_l}\rho\right].
		\end{align*}
		By definition,
		\begin{align*}
			\tr\left[\sigma^{\kappa_1,\dots,  \kappa_l}_{i_1,\dots, i_l}\rho\right]
			=&\tr[\sigma_{\kappa_1}\rho_{i_1}]\cdots \tr[\sigma_{\kappa_l}\rho_{i_l}]\prod_{j\notin\{ i_1,\dots, i_l\}}\tr[\sigma_0\rho_j]\\
			=&\tr[\sigma_{\kappa_1}\rho_{i_1}]\cdots \tr[\sigma_{\kappa_l}\rho_{i_l}].
		\end{align*}
		By Lemma \ref{lem: fact}, for each $1\le \kappa\le 3$ and $\epsilon\in\{-1,1\}$
		\begin{equation*}
			\tr\left[\sigma_{\kappa_j}\ket{e^{\kappa}_{\epsilon}}\bra{e^{\kappa}_{\epsilon}}\right]=\langle \sigma_{\kappa_j}e^{\kappa}_{\epsilon},e^{\kappa}_{\epsilon}\rangle=\epsilon\delta_{\kappa_j \kappa}.
		\end{equation*}
		Thus (recall that $\kappa_j\in\{1,2,3\}$)
		\begin{equation*}
			\tr[\sigma_{\kappa_j}\rho_{i_j}]
			=\frac{1}{3}\sum_{\kappa=1}^{3}\tr\left[\sigma_{\kappa_j}\ket{e^{\kappa}_{\epsilon^{(\kappa)}_{i_j}}}\bra{e^{\kappa}_{\epsilon^{(\kappa)}_{i_j}}}\right]
			=\frac{1}{3}\sum_{\kappa=1}^{3}\epsilon^{(\kappa)}_{i_j}\delta_{\kappa_j \kappa}
			=\frac{1}{3}\epsilon^{(\kappa_j)}_{i_j}.
		\end{equation*}
		So we have shown that 
		\begin{equation*}
			\tr\left[\sigma^{\kappa_1,\dots,  \kappa_l}_{i_1,\dots, i_l}\rho\right]
			=\tr[\sigma_{\kappa_1}\rho_{i_1}]\cdots \tr[\sigma_{\kappa_l}\rho_{i_l}]
			=\frac{1}{3^l}\epsilon^{(\kappa_1)}_{i_1}\cdots\epsilon^{(\kappa_l)}_{i_l},
		\end{equation*}
		and thus 
		\begin{align*}
			f_A(\vec{\epsilon})
			=\sum_{l\ge 0}\sum_{\kappa_1,\dots, \kappa_l\in \{1,2,3\}}\sum_{1\le i_1<\cdots <i_l\le n}\frac{1}{3^l}a^{\kappa_1,\dots,  \kappa_l}_{i_1,\dots, i_l} \epsilon^{(\kappa_1)}_{i_1}\cdots\epsilon^{(\kappa_l)}_{i_l}.
		\end{align*}
		This is nothing but \eqref{eq:key prop identity}. To see the rest of statements, note first that by definition and H\"older's inequality:
		\begin{equation*}
			|f_A(\vec{\epsilon})|\le \tr|\rho(\vec{\epsilon})|\cdot  \|A\|=\|A\|.
		\end{equation*}
		The desired $\deg(f_A)=\deg(A)$ and the inequality \eqref{ineq:comparison of lp norm} follow  immediately from \eqref{eq:key prop identity} and the fact that $|p(S)|=|S|$.
	\end{proof}

\section{Proof of Theorem \ref{thm:quantum bh} and Corollary \ref{cor:junta}}
\label{sect:quantum bh}

In this section we prove Theorem \ref{thm:quantum bh} and Corollary \ref{cor:junta}. 

\begin{proof}[Proof of Theorem \ref{thm:quantum bh}]
	Using the notation \eqref{eq:fourier expansion quantum 3} we need to prove 
	\begin{equation*}
		\left(\sum_{S\in \im (q)\subset 2^{[3n]}:|S|\le d}|\widehat{A}_{p(S)}|^{\frac{2d}{d+1}}\right)^{\frac{d+1}{2d}}\le 3^d \textnormal{BH}^{\le d}_{\pm 1}\|A\|
	\end{equation*}
for all 
$$A=\sum_{S\in \im (q)\subset 2^{[3n]}:|S|\le d} \widehat{A}_{p(S)}\sigma_{p(S)}\in M_2(\com)^{\otimes n}.$$
Let $f_A:\{-1,1\}^{3n}\to \com$ be  the function associated to $A\in M_2(\com)^{\otimes n}$ constructed in Proposition \ref{key prop}. So it is also of degree at most $d$. Then the desired result follows from the following chain of inequalities:
\begin{align*}
		\left(\sum_{S\in \im (q)\subset 2^{[3n]}:|S|\le d}|\widehat{A}_{p(S)}|^{\frac{2d}{d+1}}\right)^{\frac{d+1}{2d}}
		&\le 3^d\left(\sum_{S\subset [3n]:|S|\le d}	|\widehat{f}_A(S)|^{\frac{2d}{d+1}}\right)^{\frac{d+1}{2d}}\\
		&\le  3^d\textnormal{BH}^{\le d}_{\pm 1}\|f_A\|_\infty\\
		& \le 3^d\textnormal{BH}^{\le d}_{\pm 1}\|A\|,
\end{align*}
where in the first and last inequalities we used Proposition \ref{key prop}, and in the second inequality we used the Bohnenblust--Hille inequality on $\{-1,1\}^n$.
\end{proof}

Note that Corollary \ref{cor:multilinear} follows immediately from Theorem \ref{thm:quantum bh}. Now we prove Corollary \ref{cor:junta}.

\begin{proof}[Proof of Corollary \ref{cor:junta}]
	Fix $\eta >0$. Consider the following set
	\begin{equation*}
		\mathcal{A}_\eta:=\left\{\bs\in \{0,1,2,3\}^n:|\widehat{A}_\bs|>\eta\right\}.
	\end{equation*}
Recall that $\|A\|\le 1$. Then by Markov's inequality and noncommutative Bohenblust--Hille inequality (denoting $|\mathcal{A}_\eta|$ the cardinality of $\mathcal{A}_\eta$)
\begin{equation*}
	|\mathcal{A}_\eta|
	\le \eta^{-\frac{2d}{d+1}}\sum_{\bs\in\mathcal{A}_\eta}|\widehat{A}_{\bs}|^{\frac{2d}{d+1}}
	\le  \eta^{-\frac{2d}{d+1}}\left(\textnormal{BH}^{\le d}_{M_2(\com)}\right)^{\frac{2d}{d+1}}.
\end{equation*}
Then $B_\eta:=\sum_{\bs\in \mathcal{A}_\eta}\widehat{A}_{\bs}\sigma_{\bs}$ depends on at most $k$ coordinates with
\begin{equation*}
	k\le d|\mathcal{A}_\eta|
	\le d\eta^{-\frac{2d}{d+1}}\left(\textnormal{BH}^{\le d}_{M_2(\com)}\right)^{\frac{2d}{d+1}}.
\end{equation*}
Again, by noncommutative Bohenblust--Hille inequality and the fact that $\|A\|\le 1$
\begin{equation*}
	\|A-B_\eta\|_2^2
	=\sum_{\bs\notin \mathcal{A}_\eta}|\widehat{A}_\bs|^2
		\le \eta^{\frac{2}{d+1}}\sum_{\bs\notin \mathcal{A}_\eta}|\widehat{A}_\bs|^{\frac{2d}{d+1}}
	\le \eta^{\frac{2}{d+1}}\left(\textnormal{BH}^{\le d}_{M_2(\com)}\right)^{\frac{2d}{d+1}}.
\end{equation*}
Now choose $B=B_\eta$ with
$$
\eta:=\epsilon^{d+1}\left(\textnormal{BH}^{\le d}_{M_2(\com)}\right)^{-d}.
$$
Then $\|A-B\|_2\le \epsilon$, where $B$ is a $k$-junta with 
\begin{equation*}
	k\le d\eta^{-\frac{2d}{d+1}}\left(\textnormal{BH}^{\le d}_{M_2(\com)}\right)^{\frac{2d}{d+1}}
	=\epsilon^{-2d}d\left(\textnormal{BH}^{\le d}_{M_2(\com)}\right)^{2d}.
\end{equation*}
This finishes the proof, since $\textnormal{BH}^{\le d}_{M_2(\com)}\le C^d$ for some universal $C>0$ by Theorem \ref{thm:quantum bh}. 
\end{proof}

\section{Learning quantum observables of low degree}
\label{sect:learning}

Let us review the classical learning model first. Suppose that we want to learn a class $\mathcal{F}$ of functions on  $\{-1,1\}^n$ using the random query model which we shall explain. For $N\ge 1$, let $X_1,\dots, X_N$ be $N$ i.i.d. random variables uniformly distributed on $\{-1,1\}^n$. Then how many random queries do we need to recover functions $f\in \mathcal{F}$ nicely? More precisely, fix the error parameters $\epsilon,\delta \in (0,1)$, what is the least number $N=N(\mathcal{F},\epsilon,\delta)>0$ such that for any $f\in \mathcal{F}$ together with $N$ random queries 
\begin{equation*}
	(X_1,f(X_1)),\dots, (X_N, f(X_N))
\end{equation*}
one can construct a random function $h:\{-1,1\}^n\to \mathbb{R}$ such that 
\begin{equation*}
	\|f-h\|_2^2\le \epsilon
\end{equation*}
with probability at least $1-\delta$ ? Here $\|f\|_2$ denotes the $L^2$-norm of $f$ with respect to the uniform probability measure on $\{-1,1\}^n$. When $\mathcal{F}=\mathcal{F}^{\le d}_n$ consists of functions $f:\{-1,1\}^n\to [-1,1]$ of degree at most $d$, Eskenazis and Ivanisvili \cite{eskenazis2022learning} proved that
$$
N(\mathcal{F}^{\le d}_n,\epsilon,\delta)\le \frac{1}{\eps^{d+1}} \Big(\log\frac{n}{\delta}\Big) C(d),
$$
where $C(d)$ depends on the Bohnenblust--Hille constant $\textnormal{BH}^{\le d}_{\{\pm 1\}}$ for functions on $\{-1,1\}^n$ of degree at most $d$. So logarithmic number $\mathcal{O}_{\epsilon,\delta,d}(\log(n))$ of random queries is sufficient to learn bounded low-degree polynomials in the above sense. This improves significantly the previous work \cite{LMN93,IRRRY21} on the dimension-dependence of $N(\mathcal{F}^{\le d}_n,\epsilon,\delta)$. Later on, Eskenazis, Ivanisvili and Streck \cite{EIS} proved that $\mathcal{O}_{\epsilon,\delta,d}(\log(n))$ is also necessary.
\medskip

Now suppose that we want to learn quantum observable $A$ over $n$-qubits, i.e. $A\in M_2(\com)^{\otimes n}$, of degree at most $d$ with $\|A\| \le 1$.
Our learning model is similar to the classical setting. The random queries are now replaced with 
\begin{equation*}
	\tr [A\rho],\qquad \rho\sim \mathcal{S},
\end{equation*}
where $\rho$ samples uniformly in some set $\mathcal{S}$ of density matrices in $M_2(\com)^{\otimes n}$. Our hope is to
build another (random) observable $\widetilde A$ out of $N$ random queries such that
\begin{equation}
	\label{eps}
	\|\widetilde A -A\|_2^2\le  \eps
\end{equation}
with probability at least $1-\delta$.
 Again, the question is, how many random queries  $N=N(\eps, \delta, d, n)$ do we need to accomplish this, and how does $N$ depend on $n$?

In the remaining part of this section we provide one answer to this question with $\mathcal{S}=\mathcal{S}_n$ constructed in Proposition \ref{key prop}.




\begin{thm}\label{thm:learning}
	Suppose that $A\in M_2(\com)^{\otimes n}$ is of degree at most $d$ and $\|A\|\le 1$. Fix $\delta,\epsilon\in (0,1)$ and 
	\begin{equation*}
		N\ge \frac{C^{d\sqrt{d\log d}}}{\epsilon^{d+1}}\log\left(\frac{n}{\delta}\right),
	\end{equation*}
	with $C>0$ large enough. Then given any $N$ random density matrices $\rho^{(m)},1\le m\le N$ independently and uniformly sampled in $\mathcal{S}_n$, as well as random queries 
	\begin{equation*}
		\left(\rho^{(m)}, \tr[A\rho^{(m)}]\right),\qquad\rho^{(m)}\sim \mathcal{S}_n
	\end{equation*}
	we can construct a random $\widetilde{A}\in M_2(\com)^{\otimes n}$ such that 
	$\|A-\widetilde{A}\|_2^2\le \epsilon$ with probability at least $1-\delta$. 
\end{thm}

\begin{proof}
Again, we will reduce the problem to commutative case, which we shall explain how. To any 
\begin{equation*}
	A=\sum_{S\in \im (q)\subset 2^{[3n]}:|S|\le d} \widehat{A}_{p(S)}\sigma_{p(S)}\in M_2(\com)^{\otimes n},
\end{equation*}
we associate it with the function $f_A:\{-1,1\}^{3n}\to \com$ constructed in Proposition \ref{key prop}. Recall that $f_A$ is of degree at most $d$, $\|f_A\|_\infty\le 1$, and 
\begin{equation*}
	f_A=\tr[A\rho(\cdot )]=\sum_{S\in \im (q)\subset 2^{[3n]}:|S|\le d}3^{-|S|} \widehat{A}_{p(S)}\chi_S.
\end{equation*}
Suppose that we have $\rho(\vec{x}(m))\in \mathcal{S}_n,1\le m\le N$  as our independent random density matrices and
\begin{equation*}
	\left(\rho(\vec{x}(m)), \tr[A\rho(\vec{x}(m))]\right),\qquad 1\le m\le N
\end{equation*}
as random queries, where $\vec{x}(m),1\le m\le N$ are i.i.d. random variables uniformly distributed on $\{-1,1\}^{3n}$. Similar to the commutative setting \cite{EIS}, we approximate $$\widehat f_A(S)=3^{-|S|} \widehat{A}_{p(S)},\qquad S\in \im (q),\quad |S|\le d$$ 
with the empirical Walsh coefficients
$$
\alpha_S:= \frac1N \sum_{m=1}^N f_A(\vec x(m)) \chi_S(\vec x(m))
=\frac1N \sum_{m=1}^N\tr[A\rho(m)] \chi_S(\vec x(m)).
$$
Of course $\bE \alpha_S = \widehat f_A(S)$ where $\bE$ is with respect to the uniform distribution.
Since $\alpha_S$ is the sum of i.i.d. random variables, we get by $\|f_A\|_\infty\le 1$ and the Chernoff bound that for $b>0$
\begin{equation}
	\label{chernoff}
	\bP\left\{ |\alpha_S- \widehat f_A(S)| > b\right\}  \le 2\exp\left(-N b^2/2\right), \forall S\in \im(q), |S|\le d\,.
\end{equation} 
Note that 
\begin{align*}
	|\{S\in \im(q):|S|\le d\}|
	&=|\{\bs\in\{0,1,2,3\}^n:|\bs|\le d\}|\\
	=&\sum_{l=0}^{d}3^l\binom{n}{l}
	\le 3^d\sum_{l=0}^{d}\binom{n}{l}.
\end{align*}
Then by \eqref{chernoff} and the union bound
\begin{align*}
&	\bP\left\{ \exists S\in \im(q), |S|\le d: |\alpha_S-\widehat f_A(S)|> b \right\} \\
	\le &2e^{-Nb^2/2}	|\{S\in \im(q):|S|\le d\}|\\
	\le &2e^{-Nb^2/2}3^d \sum_{l=0}^{d}{n \choose l}.
\end{align*}

Choosing 
\begin{equation}
	\label{N}
	N\ge \frac{2}{b^2}\log\left(\frac{2\cdot 3^d}{\delta}\sum_{l=0}^{d}\binom{n}{l}\right),
\end{equation}
one achieves
\begin{equation}\label{Chernoff}
	\bP\left\{ |\alpha_S- \widehat f_A(S)| \le  b,\forall S\in \im(q), |S|\le d \right\} 
	\ge 1-\delta.
\end{equation}

\medskip

We continue to copycat \cite{eskenazis2022learning} and  introduce the random sets
$$\mathscr{S}_b:=\left\{S\in \im(q),|S|\le d: |\alpha_S|\ge 2b\right\}.$$
In view of \eqref{Chernoff}, with probability $\ge 1-\delta$:
 \begin{equation}\label{ineq:estimates of fc}
 	 \begin{cases*}
 		|\widehat{f}_A(S)|\le |\alpha_S|+|\alpha_S-\widehat{f}_A(S)|< 3b, & if  $S\notin \mathscr{S}_b$\\
 			|\widehat{f}_A(S)|\ge |\alpha_S|-|\alpha_S-\widehat{f}_A(S)|\ge b, &  if $S\in \mathscr{S}_b$
 	\end{cases*}.
 \end{equation}
The second line of \eqref{ineq:estimates of fc}, together with $\|f_A\|_\infty\le 1$ and the commutative Bohnenblust--Hille inequality \eqref{ineq:bh discrete}, yields
\begin{align*}
	|\mathscr{S}_b|\le b^{-\frac{2d}{d+1}}\sum_{S\in \mathscr{S}_b}|\widehat{f}_A(S)|^{\frac{2d}{d+1}}
	\le b^{-\frac{2d}{d+1}}\left(\textnormal{BH}^{\le d}_{\{\pm 1\}}\right)^{\frac{2d}{d+1}}.
\end{align*}
Fix $b$ as above. Consider the random polynomial in $M_2(\com)^{\otimes n}$

$$A_{b}:=\sum_{S\in \mathscr{S}_b}3^{|S|}\alpha_S\sigma_{p(S)}
=\sum_{q(\bs)\in \mathscr{S}_b}3^{|\bs|}\alpha_{q(\bs)}\sigma_{\bs}.$$
All combined, we have with probability at least $1-\delta$ that
\begin{align*}
	\|A-A_{b}\|_2^2
	\le &3^{2d}\sum_{S\in \mathscr{S}_b}|\alpha_S-\widehat{f}_A(S)|^2 +3^{2d}\sum_{S\notin \mathscr{S}_b}|\widehat{f}_A(S)|^2\\
	\le & 3^{2d} b^2|\mathscr{S}_b|+3^{2d}(3b)^{\frac{2}{d+1}}\sum_{S\notin \mathscr{S}_b}|\widehat{f}_A(S)|^{\frac{2d}{d+1}}\\
	\le &\left(3^{d+1}\textnormal{BH}^{\le d}_{\{\pm 1\}}\right)^{\frac{2d}{d+1}}\left(b^2\cdot b^{-\frac{2d}{d+1}}+(3b)^{\frac{2}{d+1}}\right)\\
	\le & 10\left(3^{d+1}\textnormal{BH}^{\le d}_{\{\pm 1\}}\right)^{\frac{2d}{d+1}}b^{\frac{2}{d+1}}.  
\end{align*}
To get an error bound $\|A-A_{b}\|_2^2\le \epsilon$, we choose 
$$b=10^{-\frac{d+1}{2}}\left(3^{d+1}\textnormal{BH}^{\le d}_{\{\pm 1\}}\right)^{-d}\epsilon^{\frac{d+1}{2}}.$$
Inserting this into \eqref{N}, we choose $N$ such that 
$$N\ge \frac{2\cdot 10^{d+1}\left(3^{d+1}\textnormal{BH}^{\le d}_{\{\pm 1\}}\right)^{2d}}{\epsilon^{d+1}}\log\left(\frac{2\cdot 3^d}{\delta}\sum_{l=0}^{d}\binom{n}{l}\right).$$
Noting moreover that (see for example \cite{eskenazis2022learning})
\begin{equation*}
	\sum_{l=0}^{d}\binom{n}{l}\le \left(\frac{en}{d}\right)^d,
\end{equation*}
we may choose 
\begin{equation*}
	N\ge \frac{C^{d\sqrt{d\log d}}}{\epsilon^{d+1}}\log\left(\frac{n}{\delta}\right),
\end{equation*}
for some $C>0$ large enough. Here, the Bohenblust--Hille constant $\textnormal{BH}^{\le d}_{\{\pm 1\}}$ is contained in $C^{d\sqrt{d\log d}}$.
With this choice of $N$, the random polynomial $\widetilde{A}:=A_{b}$ above satisfies 
$$	\|A-\widetilde{A}\|_2^2\le \epsilon,$$
with probability $\ge 1-\delta$.
\end{proof}

\section{Bohr's radius phenomenon on quantum Boolean cubes}
\label{sect:bohr's radius}
One important application of classical Bohnenblust--Hille inequalities is to study  {\em Bohr's radius problem} \cite{Bohr14}. The original problem \cite{BK97bohr} concerns the $n$-dimensional torus $\mathbb{T}^n$ with $\mathbb{T}=\{z\in\com:|z|=1\}$ and the exact asymptotic behaviour of Bohr's radius was obtained by Bayart, Pellegrino and  Seoane-Sep\'ulveda \cite{bayart2014bohr} using the polynomial version of Bohnenblust--Hille inequalities \eqref{ineq:bh 1931} with the best constants (denoted by $\textnormal{BH}^{\le d}_{\mathbb{T}}$) of subexponential growth in the degree $d$. See also \cite{defant2011bohnenblust}. A Boolean analogue  of the problem was studied by Defant, Masty\l o and P\'erez in \cite{defant18bohr}, where  Bohr's radius is replaced by  {\em Boolean's radius}.

\begin{defn}
	 {\em Boolean's radius} of a function $f:\{-1,1\}^n\to \real$ is the positive real number $\Br_n(f)$ such that 
	\begin{equation*}
		\sum_{S\subset [n]}|\widehat{f}(S)|\Br_n(f)^{|S|}=\|f\|_\infty.
	\end{equation*}
Given a class $\cF_n$ of functions on $\{-1,1\}^n$, the Boolean radius of $\cF_n$ is defined as 
\begin{equation*}
	\Br_n(\cF_n):=\inf\{\Br_n(f):f\in\cF_n\}.
\end{equation*}

\end{defn}

Of particular interests to us are the following four classes of functions
\begin{enumerate}
	\item $\cF_n(\all)$: all real functions on $\{-1,1\}^n$  with
	\begin{equation*}
		\Br_n(\all):=\Br_n(\cF_n(\all));
	\end{equation*}
	\item $\cF_n(\hom)$: all real homogeneous functions on $\{-1,1\}^n$ with
		\begin{equation*}
		\Br_n(\hom):=\Br_n(\cF_n(\hom));
	\end{equation*}
	\item $\cF_n(=d)$: all $d$-homogeneous real functions on $\{-1,1\}^n$ with
	\begin{equation*}
		\Br_n(=d):=\Br_n(\cF_n(=d));
	\end{equation*}
\item $\cF_n(\le d)$: all real functions on $\{-1,1\}^n$ of degree at most $d$  with
\begin{equation*}
	\Br_n(\le d):=\Br_n(\cF_n(\le d)).
\end{equation*}
\end{enumerate}

The problem of Boolean's radius is to determine the right order of decay of $\Br_n(\cF_n)$ as $n\to \infty$. Among others, Defant, Masty\l o and P\'erez proved the following.

\begin{thm}\label{thm:boolean radius}\cite[Theorems 2.1 \& 3.1 \& 4.1 and Corollary 3.2]{defant18bohr}
	For any $1\le d\le n$, we have
	\begin{enumerate}
		\item $\Br_n(\all)=2^{1/n}-1$ and thus $\lim\limits_{n\to\infty}n\Br_n(\all)=\log 2$;
		\item there exists $C>1$ such that for all 
		\begin{equation*}
			c_d n^{\frac{1}{2n}}\binom{n}{d}^{-\frac{1}{2d}}
			\le  	\Br_n(=d)
			\le C_d n^{\frac{1}{2n}}\binom{n}{d}^{-\frac{1}{2d}},
		\end{equation*}
		where 
		\begin{equation*}
			c_d=\frac{1}{d^{\frac{1}{2d}}C^{\sqrt{\frac{\log d}{d}}}} \qquad {\rm and } \qquad C_d=C^{\frac{1}{d}};
		\end{equation*}
	\item $\lim\limits_{n\to\infty}\sqrt{\frac{n}{\log n}}\Br_n(\hom)=1$;
	\item there exist $c'_d, C'_d>0$ such that 
	\begin{equation*}
		\frac{c'_d}{n^{\frac{1}{2}}}
		\le	\Br_n(\le d)
		\le \frac{C'_d}{n^{\frac{d-1}{2d}}}.
	\end{equation*}
	\end{enumerate}

\end{thm}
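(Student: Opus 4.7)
The plan is to handle the four claims separately. Parts (2), (4), and hence (3) are driven by combining Parseval and Hölder with probabilistic constructions of Kahane--Salem--Zygmund (KSZ) type; in part (3) the subexponential Bohnenblust--Hille inequality of Theorem \ref{thm:discrete bh} is the crucial additional input. Part (1) is a sharp Fourier-analytic identity with a short direct proof. Throughout I work with real-valued functions, which is the intended setting for $\cF_n(\mathrm{all})$ here.

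For part (1), the upper bound $\Br_n(\all)\le 2^{1/n}-1$ is witnessed by the extremal
\begin{equation*}
f_\ast(x)=1-2\cdot\mathbf{1}_{\{(-1,\dots,-1)\}}(x),
\end{equation*}
for which $\|f_\ast\|_\infty=1$, $|\widehat{f_\ast}(\emptyset)|=1-2^{1-n}$, and $|\widehat{f_\ast}(S)|=2^{1-n}$ for $S\ne\emptyset$; a direct computation gives $\sum_S|\widehat{f_\ast}(S)|r^{|S|}=1-2^{2-n}+2^{1-n}(1+r)^n$, which equals $1$ precisely when $(1+r)^n=2$. For the matching lower bound, with $\eta_S:=\mathrm{sgn}\,\widehat f(S)\in\{-1,1\}$, write
\begin{equation*}
\sum_S|\widehat f(S)|r^{|S|}=2^{-n}\sum_{\tau\in\{-1,1\}^n}f(\tau)K_\eta(\tau),\qquad K_\eta(\tau):=\sum_S\eta_S\chi_S(\tau)r^{|S|},
\end{equation*}
and note that $\sum_\tau K_\eta(\tau)=2^n\eta_\emptyset$ by character orthogonality. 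The triangle inequality gives $\eta_\emptyset K_\eta(\tau)\ge 1-\sum_{S\ne\emptyset}r^{|S|}=2-(1+r)^n$, which is non-negative at $r=2^{1/n}-1$, so $K_\eta$ has constant sign and $\sum_\tau|K_\eta(\tau)|=|\sum_\tau K_\eta(\tau)|=2^n$. Hence $\sum_S|\widehat f(S)|r^{|S|}\le 2^{-n}\|f\|_\infty\cdot 2^n=\|f\|_\infty$, as required.

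For parts (2) and (4), the lower bound proceeds via Parseval and Cauchy--Schwarz: for a $d$-homogeneous $f$,
\begin{equation*}
\sum_{|S|=d}|\widehat f(S)|\le \binom{n}{d}^{\tfrac12}\Bigl(\sum_{|S|=d}|\widehat f(S)|^2\Bigr)^{\tfrac12}=\binom{n}{d}^{\tfrac12}\|f\|_2\le \binom{n}{d}^{\tfrac12}\|f\|_\infty,
\end{equation*}
and since $\sum_S|\widehat f(S)|r^{|S|}=r^d\sum_{|S|=d}|\widehat f(S)|$, the defining equation of $\Br_n(f)$ yields $\Br_n(f)\ge\binom{n}{d}^{-1/(2d)}$; the $n^{1/(2n)}$ factor and refined $c_d$ come from replacing Cauchy--Schwarz by a sharper Hausdorff--Young / BH estimate on $\sum_{|S|=d}|\widehat f(S)|^{2d/(d+1)}$. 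For the matching upper bound I would use the KSZ random polynomial $P=\sum_{|S|=d}\epsilon_S\chi_S$ with i.i.d.\ Rademacher $\epsilon_S$: standard concentration yields $\|P\|_\infty\lesssim \sqrt{n\binom{n}{d}}$ with high probability, while $\sum_{|S|=d}|\widehat P(S)|=\binom{n}{d}$ deterministically, giving the matching bound on $\Br_n(=d)$. Part (4) follows by applying the same estimate degree by degree and summing over $l\le d$.

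Part (3) is obtained from (2) by optimizing in $d$: since $\textnormal{BH}^{\le d}_{\{\pm 1\}}\le C^{\sqrt{d\log d}}$ grows only subexponentially, choosing $d\sim\log n$ in the refined bound balances the $d$-dependent factors against $\binom{n}{d}^{-1/(2d)}\sim(d/n)^{1/2}$, producing the leading rate $\sqrt{\log n/n}$; the matching upper bound comes from the same KSZ construction at $d\sim\log n$. The main obstacle is the sharp constant $1$ in part (3): it relies crucially on the subexponential rather than exponential growth of the BH constants and on optimal KSZ-type bounds, and carefully tracking the optimization and the interaction between the Hausdorff--Young refinement and Stirling's formula is more delicate than the other parts. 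Part (1) is clean once the triangle-inequality trick is set up, and in (2)/(4) only the exponent on $\binom{n}{d}$ is essential.
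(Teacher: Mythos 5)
This theorem is quoted by the paper from \cite{defant18bohr} without proof, so there is no in-paper argument to compare against; I am therefore assessing your proposal on its own merits. Part (1) is correct and complete: the extremal function $f_\ast$ and the kernel argument (writing $\sum_S|\widehat f(S)|r^{|S|}$ as $2^{-n}\sum_\tau f(\tau)K_\eta(\tau)$ and showing $K_\eta$ has constant sign at $r=2^{1/n}-1$) is exactly the right proof, and your restriction to real-valued $f$ is the correct setting for this sharp constant. For part (2), the ingredients (BH plus H\"older for the lower bound, a Kahane--Salem--Zygmund polynomial for the upper bound) are the right ones, but note that the factor $n^{\frac{1}{2n}}$ in the statement as printed must be a typo for $n^{\frac{1}{2d}}$: for $d=1$ every $f=\sum_j a_jx_j$ satisfies $\|f\|_\infty=\sum_j|a_j|$, so $\Br_n(=1)=1$, which is incompatible with $n^{\frac{1}{2n}}\binom{n}{1}^{-\frac12}\to 0$. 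Your Salem--Zygmund bound $\|P\|_\infty\lesssim\sqrt{n\binom{n}{d}}$ delivers precisely $C^{1/d}n^{\frac{1}{2d}}\binom{n}{d}^{-\frac{1}{2d}}$ and cannot produce the printed $n^{\frac{1}{2n}}$; you should say you are proving the corrected statement rather than claim your estimate matches the printed one.

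The genuine gap is in part (4). The lower bound does not follow by ``applying the same estimate degree by degree and summing,'' because of the constant term. Normalize $\|f\|_\infty=1$ and suppose $|\widehat f(\emptyset)|=1-\delta$ with $\delta$ small. Parseval gives only $\sum_{S\neq\emptyset}|\widehat f(S)|^2\le 1-(1-\delta)^2\le 2\delta$, so Cauchy--Schwarz bounds the degree-$\ge 1$ contribution by roughly $r\sqrt{2n\delta}$, and the requirement $(1-\delta)+r\sqrt{2n\delta}\le 1$ forces $r\le\sqrt{\delta/(2n)}$, which degenerates as $\delta\to 0$ and does not yield a uniform bound $\Br_n(\le d)\ge c_d'n^{-1/2}$. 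Closing this requires a Wiener/Schur-type coefficient estimate of the form $|\widehat f(S)|\lesssim \|f\|_\infty^2-\widehat f(\emptyset)^2$ for $S\neq\emptyset$, which is a key lemma in \cite{defant18bohr} and is entirely absent from your sketch. Finally, for part (3) your strategy (optimize the two-sided bounds of (2) over $d\sim\log n$, using the subexponential BH constant) is the correct one and does produce the constant $1$, but as you acknowledge, the sharp-constant bookkeeping on both sides is the bulk of the work and is not supplied.
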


We refer to \cite{defant18bohr} for more discussions on the similarity and differences between the Boolean cube case and torus case. 

The concept of Boolean radius carries over to the quantum setting without any difficulties.

\begin{defn}
The {\em quantum Boolean radius} of self-adjoint $A\in M_2(\com)^{\otimes n }$ is the positive real number $\qBr_n(A)$ such that 
\begin{equation*}
	\sum_{\bs\in \{0,1,2,3\}^n}|\widehat{A}_\bs|\qBr_n(A)^{|\bs|}=\|A\|.
\end{equation*}	
Similarly, one may define for a class $\cF_n'$ of self-adjoint matrices in $M_2(\com)^{\otimes n }$
\begin{equation*}
	\qBr_n(\cF_n'):=\inf\{\qBr_n(A):A\in \cF_n'\}.
\end{equation*}
\end{defn}

\begin{rem}
In \cite{defant18bohr} only the real-valued functions were considered and some arguments rely on the real structure. We are not going to discuss the possible extension to the complex-valued functions here. It is for this reason we require  $A$ to be self-adjoint. In fact, when $A$ is self-adjoint, the function $f_A=\tr[A\rho(\cdot)]$ on $\{-1,1\}^{3n}$ constructed in Proposition \ref{key prop} is real-valued and has real Fourier coefficients, so that we can use the results in \cite{defant18bohr} directly. We leave the problem for general $A$ to future study. 
\end{rem}

If $\cF_n$ denotes one of the four classes of functions (1-4)  on $\{-1,1\}^n$ listed as above, then we use $\cF_n^q$ to denote the quantum counterpart of class of matrices in $M_2(\com)^{\otimes n}$. For example, if $\cF_n=\cF_n(\le d)$ is the class of  polynomials on $\{-1,1\}^n$ of degree at most $d$,  then $\cF_n^q=\cF^q_n(\le d)$ denotes the class of \textcolor{blue}{self-adjoint  $A\in M_2(\com)^{\otimes n}$ of degree at most $d$}. Then our main result on quantum Boolean radius is the following.

\begin{thm}
	For any $1\le d\le n$ and any $\cF_n$ of the four classes of functions (1-4) listed above, we have
	\begin{equation*}
		\qBr_n(\cF_n^q)\le 	\Br_n(\cF_n)\qquad {\rm and }\qquad \Br_{3n}(\cF_{3n})\le 3\qBr_{n}(\cF_{n}^q).
	\end{equation*}
\end{thm}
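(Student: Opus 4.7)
I would prove the two inequalities separately, using the commutative-to-quantum correspondence implicit in the proof of Theorem \ref{thm:quantum bh}. The first inequality comes from embedding the classical Boolean cube algebra into the matrix algebra, and the second comes from the scaled polynomial $f_A$ built in Section \ref{sect:quantum bh}. In both directions the argument essentially compares the Boolean-radius defining sums on the two sides.

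For $\qBr_n(\cF_n^q)\le \Br_n(\cF_n)$, I would use the embedding already noted in the paper: the algebra of functions on $\{-1,1\}^n$ sits inside $M_2(\com)^{\otimes n}$ as the commutative subalgebra spanned by $\sigma_\bs$ with $\bs\in\{0,3\}^n$. To each $f=\sum_{S}\widehat{f}(S)\chi_S\in\cF_n$ associate the matrix $A_f:=\sum_{S}\widehat{f}(S)\sigma_{\bs(S)}$, where $\bs(S)\in\{0,3\}^n$ has a $3$ in position $j$ iff $j\in S$. Since the $\sigma_{\bs(S)}$ are commuting self-adjoint unitaries whose simultaneous spectrum reproduces the characters $\chi_S$ on $\{-1,1\}^n$, one gets $\|A_f\|=\|f\|_\infty$; moreover the Fourier coefficient of $A_f$ at $\bs(S)$ equals $\widehat{f}(S)$, and $|\bs(S)|=|S|$. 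Hence the quantum and classical Boolean-radius defining sums for $A_f$ and $f$ coincide term by term for every $r>0$, forcing $\qBr_n(A_f)=\Br_n(f)$. Since $f\in\cF_n$ implies $A_f\in\cF_n^q$ for each of the four classes under consideration, taking the infimum over $f$ yields the first inequality.

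For $\Br_{3n}(\cF_{3n})\le 3\qBr_n(\cF_n^q)$, fix $A\in\cF_n^q$ and reuse the polynomial $f_A:\{-1,1\}^{3n}\to\com$ and the injection $q$ built in the proof of Theorem \ref{thm:quantum bh}. By construction $\widehat{f}_A(q(\bs))=3^{-|\bs|}\widehat{A}_\bs$ (and $\widehat{f}_A(S)=0$ for $S\notin\im(q)$) with $|q(\bs)|=|\bs|$, and that proof already supplies $\|f_A\|_\infty\le\|A\|$. Setting $r:=3\qBr_n(A)$ gives
\begin{equation*}
\sum_{S\subset[3n]}|\widehat{f}_A(S)|\,r^{|S|}=\sum_{\bs}3^{-|\bs|}|\widehat{A}_\bs|\,\bigl(3\qBr_n(A)\bigr)^{|\bs|}=\sum_{\bs}|\widehat{A}_\bs|\,\qBr_n(A)^{|\bs|}=\|A\|\ge\|f_A\|_\infty.
\end{equation*}
Since $r\mapsto\sum_{S}|\widehat{f}_A(S)|\,r^{|S|}$ is monotonically increasing in $r$ and equals $\|f_A\|_\infty$ exactly at $r=\Br_{3n}(f_A)$, this forces $\Br_{3n}(f_A)\le 3\qBr_n(A)$.

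The final step is a routine verification that $A\mapsto f_A$ respects the four class labels: the construction produces a classical monomial of degree $l$ for each Pauli monomial of Pauli-degree $l$ appearing in $A$, so ``degree $\le d$'', ``$d$-homogeneous'', ``homogeneous'' and ``no restriction'' all transfer from $\cF_n^q$ to $\cF_{3n}$. Taking the infimum over $A\in\cF_n^q$ then gives the second inequality. I don't foresee any serious obstacle: the nontrivial analytic input, $\|f_A\|_\infty\le\|A\|$ together with the Fourier coefficient identity, is already available from Section \ref{sect:quantum bh}, so the remaining work is the monotonicity argument and the class-preservation check.
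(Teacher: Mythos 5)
Your proposal is correct and follows essentially the same route as the paper: the first inequality via the embedding of $\{-1,1\}^n$-functions as the commutative subalgebra spanned by $\sigma_\bs$, $\bs\in\{0,3\}^n$ (which the paper dismisses as trivial), and the second via the scaled polynomial $f_A$ and the bound $\|f_A\|_\infty\le\|A\|$ from the proof of Theorem \ref{thm:quantum bh}, evaluated at $r=3\qBr_n(A)$. The only difference is that you spell out the monotonicity and class-preservation checks that the paper leaves implicit.
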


\begin{proof}
	The first inequality is trivial, as $\cF_n$ can be viewed as a subset of $\cF^q_n$ with all the relevant structures (norm, degree etc.) preserved. In fact, for any $f:\{-1,1\}^n\to \real$ with Fourier expansion
	\begin{equation*}
		f(x_1,\dots,x_n)=\sum_{0\le l\le n}\sum_{1\le i_1<\cdots<i_l\le n}
		a_{i_1,\dots, i_l}x_{i_1}\cdots x_{i_l},
	\end{equation*}
consider the \textcolor{blue}{self-adjoint} matrix
	\begin{equation*}
	A_f=\sum_{0\le l\le n}\sum_{1\le i_1<\cdots<i_l\le n}
	a_{i_1,\dots, i_l}\sigma_{i_1,\dots, i_l}^{3,\dots, 3}.
\end{equation*}
Clearly, $\deg(f)=\deg(A_f)$ and $A_f\in \mathcal{F}^q_n$ whenever $f\in \mathcal{F}_n$. Note that the canonical basis of $M_2(\com)$ are eigenvectors of $\sigma_3$ corresponding to eigenvalues $1,-1$, respectively. Let us denote this basis $\{e_1,e_{-1}\}$. Then under the basis $\{e_{x_1}\otimes \cdots \otimes e_{x_n}: x_j\in \{-1,1\},1\le j\le n\}$, $A_f$ is a diagonal matrix with diagonal entries being
\begin{equation*}
	\langle A_f(e_{x_1}\otimes \cdots \otimes e_{x_n}),e_{x_1}\otimes \cdots \otimes e_{x_n}\rangle 
	=\sum_{0\le l\le n}\sum_{1\le i_1<\cdots<i_l\le n}
	a_{i_1,\dots, i_l}x_{i_1}\cdots x_{i_l}.
\end{equation*}
So $\|f\|_\infty=\|A_f\|$. By definition, we have $\Br_n(f)=\qBr_n(A_f)$ and this proves the first inequality.

	To prove the second inequality, we appeal to our reduction method again. 
	In fact, take any $A=A^\ast\in \cF_n^q\subset M_2(\com)^{\otimes n}$. Consider the  function $f_A:\{-1,1\}^{3n}\to \real$ constructed in Proposition \ref{key prop} which belongs to $\cF_{3n}$ whenever $\cF_n$ is one of four aforementioned classes (1-4). Take the class (3) $\mathcal{F}_n(=d)$ for example. By construction, $f_A$ is $d$-homogeneous if $A$ is. Recall that $\|f_A\|_\infty\le\|A\|$, and by definition of $\qBr_n(A)$:
	\begin{equation*}
		\sum_{\bs}|\widehat{A}_\bs|\qBr_n(A)^{|\bs|}
		=\sum_{S\in \im (q)\subset 2^{[3n]}}|\widehat{A}_{p(S)}|\qBr_n(A)^{|S|}
		=\|A\|.
	\end{equation*}	
	Then for $f_A:\{-1,1\}^{3n}\to \real$, we have 
	\begin{align*}
		\sum_{S\in \im (q)\subset 2^{[3n]}}|\widehat{f}_A(S)|(3\qBr_n(A))^{|S|}
		=&\sum_{S\in \im (q)\subset 2^{[3n]}}|\widehat{A}_{p(S)}|\qBr_n(A)^{|S|}\\
		=&\|A\|\ge \|f_A\|_{\infty}.
\end{align*}
Therefore, by definition of $\Br_{3n}(f_A)$:
\begin{equation*}
	\Br_{3n}(f_A)\le 3\qBr_n(A).
\end{equation*}
So for any  $A=A^\ast\in \cF_n^q$ we find $f_A\in \cF_{3n}$ such that the above inequality holds. Therefore we get
	$$\Br_{3n}(\cF_{3n})\le 3\qBr_n(\cF_n^q)$$
by definitions of $\Br_{3n}(\cF_{3n})$ and $\qBr_n(\cF_n^q)$. This  concludes the proof of the second inequality. 
\end{proof}

\begin{rem}
	It is still possible to improve our estimates using the reduction method. For example, we have shown that for the class $\mathcal{F}_n^q(\textnormal{all})$ of all self-adjoint matrices in $M_2(\com)^{\otimes n}$
	\begin{equation*}
		 \frac{2^{1/(3n)}-1}{3}\le 
		\qBr_n(\cF_n^q(\textnormal{all}))\le 2^{1/n}-1.
	\end{equation*} 
But we can actually prove 
	\begin{equation*}
	\frac{2^{1/n}-1}{9}\le
	\qBr_n(\cF_n^q(\textnormal{all}))\le 2^{1/n}-1.
\end{equation*} 
In fact, for any $A=A^\ast\in M_2(\com)^{\otimes n}$ with $\|A\|=1$, suppose $f_A:\{-1,1\}^{3n}\to [-1,1]$ is constructed as before. Then for any $\textbf{0}\neq \bs =p(S)\in \{0,1,2,3\}^n$, we have by \cite[Lemma 2.2]{defant18bohr} that 
\begin{equation*}
		|\widehat{A}_{\mathbf{0}}|+	3^{-|S|}|\widehat{A}_{p(S)}|
=|\widehat{f}_A(\emptyset)|+	|\widehat{f}_A(S)|
		\le 1.
\end{equation*}
So we get
\begin{align*}
	\sum_{\bs\in \{0,1,2,3\}^n}|\widehat{A}_{\bs}|r^{|\bs|}
	&\le |\widehat{A}_{\mathbf{0}}|+(1- |\widehat{A}_{\mathbf{0}}|)\sum_{\bs\neq \mathbf{0}}(3r)^{|\bs|}\\
	&\le |\widehat{A}_{\mathbf{0}}|+(1- |\widehat{A}_{\mathbf{0}}|)\sum_{k=1}^{n}\binom{n}{k}(9r)^{k}\\
	&= |\widehat{A}_{\mathbf{0}}|+(1- |\widehat{A}_{\mathbf{0}}|)\left((1+9r)^n-1\right).
\end{align*}
This establishes the bound $\qBr_n(\cF_n^q(\textnormal{all}))\ge 	\frac{2^{1/n}-1}{9}$ by definition. 
\end{rem}


\section{Discussions}
\label{sect:diss}

We briefly compare our results with the work of Huang, Chen and  Preskill in \cite{CHP}. 
	
	\begin{enumerate}
		\item In the work \cite{CHP}, the noncommutative  Bohnenblust--Hille inequality \eqref{ineq:quantum bh} (\cite[Corollary 3]{CHP}) follows from a more general result \cite[Theorem 5]{CHP} which we do not have. The noncommutative Bohnenblust--Hille constant ($C_d$ in \eqref{ineq:quantum bh}) we obtained is of exponential growth $\sim C^d$, which is better than theirs $\sim d^{\mathcal{O}(d)}$. Remark that the best known bound for the (commutative) Boolean cubes is subexponential $\sim C^{\sqrt{d\log d}}$ \cite{defant2019fourier}. 
		
		\item Our proof of \eqref{ineq:quantum bh} is different from theirs. We use an argument that reduces the problem to the commutative case and the proof looks simpler,  while their proof that is more self-contained, combines several technical estimates that can be useful to other problems. 
		
		\item Our better Bohnenblust--Hille constant yields an immediate improvement for learning quantum observable up to a small prediction error in their work.  In particular, this allows us to remove the $\log \log (1 / \epsilon)$ factor in the exponent of \cite[Eq. (A17)]{CHP}. It also yields an immediate improvement in the sample complexity for learning arbitrary quantum process. This is because the learning is achieved by considering the unknown observable to be the observable after Heisenberg evolution under the unknown quantum process.
		
\end{enumerate}

\subsection*{Data availability}

All data generated or analysed during this study are included in this article.


\newcommand{\etalchar}[1]{$^{#1}$}

\end{document}